\newtheorem{theorem}{Theorem}
\newcommand{\binod}[2]{\left[\begin{array}{c}\!#1\!\\\!#2\!\end{array}\right]}
\begin{document}

\title{A sum rule for $r$-derangements obtained from the Cauchy product of exponential generating functions}

\author[$\dagger$]{Jean-Christophe {\sc Pain}$^{1,2,}$\footnote{jean-christophe.pain@cea.fr}\\
\small
$^1$CEA, DAM, DIF, F-91297 Arpajon, France\\
$^2$Universit\'e Paris-Saclay, CEA, Laboratoire Mati\`ere en Conditions Extr\^emes,\\ 
91680 Bruy\`eres-le-Ch\^atel, France
}

\maketitle

\begin{abstract}
We propose a sum rule for $r$-derangements (meaning that the elements are restricted to be in distinct cycles in the cycle decomposition) involving binomial coefficients. The identity, obtained using the Cauchy product of two exponential generating functions, generalizes a known relation for usual derangements.
\end{abstract}

\section{Introduction}

The number of derangements of a $n$-element set is the number of fixed point-free permutations of $n$ elements \cite{Apostol1974,Graham1994} and is given by
\begin{equation}
D(n)=n!\sum_{i=0}^{n}\frac{(-1)^i}{i!},
\end{equation}
or also, for $n\geq 1$:
\begin{equation}
D(n)=\Big\lvert\Big\lvert\frac{n!}{e}\Big\lvert\Big\lvert=\Big\lfloor\frac{n!}{e}+\frac{1}{2}\Big\rfloor=\left\lfloor {\frac {n!+1}{e}}\right\rfloor,
\end{equation}
where $\lfloor x\rfloor$ represents the integer part of $x$, and $||x||$ the nearest integer to $x$.

Now, let us consider a permutation, and its cycle decomposition. The fact that we only deal here with fixed point-free permutations means that in the considered permutation, any cycle is of length greater than one. Now, let us take a permutation of $n+r$ elements and restrict the first $r$ of these to be in distinct cycles, then we arrive at the definition of the so-called $r$-derangements \cite{Wang2017}. In other words, for $r$-derangements the elements are restricted to be in distinct cycles in the cycle decomposition. As an example $(184)(2937)(65)$ is a 2-derangement (since 1 and 2 do not share any common circle), but not a 3-derangement.

The $r$-derangements are given, for $n\geq r$, by
\begin{equation}
    D_r(n)=\sum_{j=r}^n(-1)^{n-j}\,\binom{j}{r}\,\frac{n!}{(n-j)!}.
\end{equation}
It can easily be checked that $D_1(n)=D(n+1)$, $D_r(r)=r!$ (with $r\geq 1$) and $D_r(r+1)=r(r+1)!$ (for $r\geq 2$). It was shown by Wang \emph{et al.} \cite{Wang2017} that for $n>2$ and $r>0$, one has
\begin{equation}
    D_r(n)=r\,D_{r-1}(n-1)+(n-1)\,D_r(n-2)+(n+r-1)\,D_r(n-1).
\end{equation}

\section{Cauchy product of generating functions}

Let $(u_n)_{n\geq 0}$ and $(v_n)_{n\geq 0}$ be real or complex sequences. It was proved by Mertens that, if the series 
\begin{equation}
U(z)=\sum _{n=0}^{\infty}u_{n}
\end{equation}
converges to $L_1$, and 
\begin{equation}
V(z)=\sum_{n=0}^{\infty}v_{n}
\end{equation}
converges to $L_2$, and if at least one of them converges absolutely, then their Cauchy product 
\begin{equation}
\left(\sum _{n=0}^{\infty }u_{n}\right)\left(\sum _{n=0}^{\infty }v_{n}\right)=\sum _{n=0}^{\infty }w_{n} 
\end{equation}
with
\begin{equation}\label{wn}
w_{n}=\sum _{k=0}^{n}u_{k}v_{n-k}
\end{equation}
converges to $L_1\times L_2$ \cite{Rudin1976}. The theorem is still valid in a Banach algebra. Note that in the case where the two sequences are convergent but not absolutely convergent, the Cauchy product is still Ces\`aro summable \cite{Hardy2000} and then
\begin{equation}
\lim_{N\rightarrow\infty}\frac{1}{N}\left(\sum _{n=1}^{N}\sum_{i=1}^{n}\sum_{k=0}^{i}u_{k}v_{i-k}\right)=L_1L_2.
\end{equation}
In the case of two powers series,
\begin{equation}\label{s1}
\mathscr{U}(z)=\sum _{n=0}^{\infty}u_{n}\,x^n
\end{equation}
and 
\begin{equation}\label{s2}
\mathscr{V}(z)=\sum_{n=0}^{\infty}v_{n}\,x^n,
\end{equation}
the Cauchy product reads
\begin{equation}
\left(\sum_{n=0}^{\infty }u_{n}x^{n}\right)\cdot \left(\sum _{n=0}^{\infty }v_{n}x^{n}\right)=\sum _{n=0}^{\infty }w_{n}x^{n}, 
\end{equation}
with $w_n$ still given by Eq. (\ref{wn}).

In the case of two generating functions \cite{Wilf1990}:
\begin{equation}
A(x)=\sum_{n=0}^{\infty}a_n\,\frac{x^n}{n!}
\end{equation}
and
\begin{equation}
B(x)=\sum_{n=0}^{\infty}b_n\,\frac{x^n}{n!}
\end{equation}
which corresponds to setting $u_n=a_n/n!$ and $v_n=b_n/n!$ in Eqs. (\ref{s1}) and (\ref{s2}), we have
\begin{align}
A(x)B(x)&=\left(\sum_{n=0}^{\infty }a_{n}\frac{x^{n}}{n!}\right)\left(\sum _{n=0}^{\infty }b_{n}\frac{x^{n}}{n!}\right)\\
&=\sum_{n=0}^{\infty}\sum _{k=0}^{n}\frac{1}{k!}\frac{1}{(n-k)!}\,a_k\,b_{n-k}\,x^n\\
&=\sum_{n=0}^{\infty}\sum _{k=0}^{n}\frac{n!}{k!(n-k)!}\,a_k\,b_{n-k}\,\frac{x^n}{n!}\\
&=\sum_{n=0}^{\infty}\left(\sum_{k=0}^n\binom{n}{k}\,a_k\,b_{n-k}\right)\frac{x^{n}}{n!}\\
\end{align}
and thus
\begin{equation}\label{solu1}
A(x)B(x)=C(x)=\sum_{n=0}^{\infty}c_n\,\frac{x^{n}}{n!}
\end{equation}
with
\begin{equation}\label{solu2}
c_n=\sum_{k=0}^n\binom{n}{k}\,a_k\,b_{n-k}.
\end{equation}

\section{Sum rule for $r$-derangements}

\begin{theorem}

For $n\geq r$, the $r$-derangements satisfy the sum rule
\begin{equation}
\sum_{k=0}^n\binom{n}{k}\,D_r(k)=n!\binom{n}{r}.
\end{equation}

\end{theorem}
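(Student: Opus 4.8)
The plan is to recognize the left-hand side as a single Cauchy-product coefficient $c_n$ of the type (\ref{solu2}), with the two input sequences chosen so that the product $C(x)$ has a transparent closed form. Specifically, I would set $a_k=D_r(k)$, with the convention $D_r(k)=0$ for $k<r$ so that the sum effectively starts at $k=r$, and $b_k=1$ for all $k$. Then $B(x)=\sum_{k\geq 0}x^k/k!=e^x$, and formula (\ref{solu2}) immediately gives $c_n=\sum_{k=0}^n\binom{n}{k}D_r(k)$, which is exactly the left-hand side of the sum rule. It then remains only to identify $c_n$ by evaluating $C(x)=A(x)B(x)$ in closed form.

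The crucial intermediate step is to determine the exponential generating function $A(x)=\sum_k D_r(k)\,x^k/k!$. Starting from the defining closed form of $D_r(n)$ and dividing by $n!$, one writes $D_r(n)/n!=\sum_{j=r}^n\binom{j}{r}\,(-1)^{n-j}/(n-j)!$, which exhibits $D_r(n)/n!$ as the coefficient of $x^n$ in an ordinary power-series product. The first factor is $\sum_{j\geq r}\binom{j}{r}x^j=x^r/(1-x)^{r+1}$, a standard negative-binomial identity, and the second is $\sum_{m\geq 0}(-1)^m x^m/m!=e^{-x}$. Hence
\begin{equation}
A(x)=\sum_{k=0}^\infty D_r(k)\,\frac{x^k}{k!}=\frac{x^r\,e^{-x}}{(1-x)^{r+1}}.
\end{equation}

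With $A(x)$ in hand the remainder is immediate: $C(x)=A(x)e^x=x^r/(1-x)^{r+1}=\sum_{n\geq 0}\binom{n}{r}x^n$, using the same negative-binomial identity once more. Comparing this with the defining form $C(x)=\sum_n c_n\,x^n/n!$ forces $c_n/n!=\binom{n}{r}$, that is $c_n=n!\binom{n}{r}$. Equating the two expressions obtained for $c_n$ then yields the claimed sum rule.

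I expect the main obstacle to be the derivation of $A(x)$, since it requires reading the defining sum for $D_r(n)$ as a convolution and invoking $\sum_{j\geq r}\binom{j}{r}x^j=x^r/(1-x)^{r+1}$; once this generating function is established, multiplication by $e^x$ collapses the expression and the identification of $c_n$ is routine. A minor point to handle with care is the convention $D_r(k)=0$ for $k<r$, which reconciles the Cauchy-product index range $0\le k\le n$ with the effective range $r\le k\le n$ appearing in the statement. One may also sanity-check the final identity against the tabulated values $D_r(r)=r!$ and $D_r(r+1)=r(r+1)!$, for which both sides are easily seen to agree.
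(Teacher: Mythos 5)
Your proof is correct and follows essentially the same route as the paper: multiply the exponential generating function $A(x)=x^r e^{-x}/(1-x)^{r+1}$ by $B(x)=e^x$, apply the Cauchy-product formula (\ref{solu2}), and read off the coefficient of $x^n$ in $x^r/(1-x)^{r+1}$. The only difference is cosmetic: you derive $A(x)$ from the closed-form sum for $D_r(n)$ (together with the explicit convention $D_r(k)=0$ for $k<r$), whereas the paper cites Ref.~\cite{Wang2017} for it, so your version is slightly more self-contained.
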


\begin{proof}
    
The generating function for $r$-derangements is (see for instance Ref. \cite{Wang2017}):
\begin{equation}
A(x)=\sum_{n=0}^{\infty}\frac{D_r(n)}{n!}\,x^n=\frac{x^r\,e^{-x}}{(1-x)^{r+1}}.
\end{equation}
The generating function of the constant value 1 is
\begin{equation}
B(x)=\sum_{k=0}^{\infty}\frac{x^k}{k!}=e^x.
\end{equation}
Therefore,
\begin{equation}
A(x)B(x)=\frac{x^r}{(1-x)^{r+1}}.
\end{equation}
Let us apply formulas (\ref{solu1}) and (\ref{solu2}) with $a_k=D_r(k)$ and $b_{n-k}=1$; we have
\begin{equation}
c_n=\sum_{k=0}^n\binom{n}{k}\,a_k\,b_{n-k}=\sum_{k=0}^n\binom{n}{k}\,D_r(k)=n!\,[x^n]\,\sum_{k=0}^{\infty}\frac{(r+1)(r+2)\cdots(r+q)}{q!}\,x^{q+r}
\end{equation}
and keeping only the term $q+r=n$ in the summation of the right-hand side, one gets
\begin{equation}
\sum_{k=0}^n\binom{n}{k}\,D_r(k)=n!\,\frac{(r+q)!}{r!q!}=n!\,\frac{n!}{r!(n-r)!}
\end{equation}
and thus, finally
\begin{equation}
\sum_{k=0}^n\binom{n}{k}\,D_r(k)=n!\binom{n}{r},
\end{equation}
which completes the proof.

\end{proof}

The case of usual derangements is recovered setting $r=0$, and one has
\begin{equation} 
\sum_{k=0}^n\binom{n}{k}\,D(k)=n!.   
\end{equation}
It is worth mentioning that Mez\H{o} \emph{et al.} used orthogonal polynomials (in particular Charlier polynomials) to derive new identities for $r$-derangements \cite{Mezo2019}.

The technique presented here can be applied to other derangements \cite{Chow2006,Caparelli2018,Moll2018,Mezo2019b,Zhang2022,Mezo2021}, such as B-type derangements (OEIS sequence A000354) \cite{OEIS}. Let us consider the ensemble $\mathscr{E}_n=\{1,2,3,\cdots,n\}$. The number $D^B(n)$ of signed permutations $\sigma$ such that $\sigma(i)\ne i$ $\forall i\in \mathscr{E}_n$ is given by \cite{Chow2006,Assaf2010}:
\begin{equation}
    D^B(n)=n!\sum_{k=0}^n\frac{(-1)^k\,2^{n-k}}{k!}
\end{equation}
and satisfies the generating function
\begin{equation}
    \sum_{n=0}^{\infty}D^B(n)\,\frac{x^n}{n!}=\frac{e^{-x}}{1-2x}.
\end{equation}
Similarly, $r$-derangements of $B$-type on the set $\mathscr{E}_{n+r}$ are signed permutations of $\mathscr{E}_{n+r}$, without fixed points and with $r$ elements constrained to be in distinct cycles. Their number can be denoted $D_r^B(n)$ and is equal to \cite{Mezo2021}:
\begin{equation}
    D_r^B(n)=\sum_{k=0}^n\binod{n}{k}^B_{\geq 2,r},
\end{equation}
where $\binod{n}{k}^B_{\geq 2,r}$ is a so-called $r$-Stirling number of $B$ type. The latter is related to the Lah numbers $L(n_1,n_2)$ (quantifying the number of ways a set of $n_1$ elements can be partitioned into $n_2$ nonempty linearly ordered subsets\cite{Riordan2002}) through (see for instance Refs. \cite{Broder1984,Ma2023}):
\begin{equation}
    \binod{n}{0}^B_{\geq 2,r}=\sum_{j=0}^r\binom{r}{j}\,2^{n+r-j}\,(r-j)!\,L(n,r-j).
\end{equation}
The nice following sum rule
\begin{equation}
L(n,r-1)=\frac{1}{(r+1)!}\sum_{s=0}^{n-1}\binom{n}{s}(n-s)\,D_r(s)=\frac{1}{(r+1)!}\sum_{s=0}^{n}\binom{n}{s}\,s\,D_r(n-s)
\end{equation}
was recently obtained \cite{Wang2017}. The $B$-type $r$-Stirling numbers are generalizations of $r$-Stirling numbers, and are related to Riordan arrays \cite{Shapiro1991,Riordan2002,Barry2007} and Coxeter groups \cite{Bjorner2005}.

\section{Conclusion}

We obtained, using the Cauchy product of two exponential generating functions, a sum rule for $r$-derangements involving binomial coefficients. The basic idea consists in taking advantage of the fact that the exponential term $e^{-x}$ in the generating function of $r$-derangements cancels with the $e^x$ one in the generating function of the constant suite equal to unity. We plan to derive similar identities to generalized derangements, such as $B$-type $r$-derangements.

\end{document}